\numberwithin{equation}{section}
\theoremstyle{plain}
\newtheorem{theorem}[equation]{Theorem}
\newtheorem{lemma}[equation]{Lemma}
\newtheorem{corollary}[equation]{Corollary}
\theoremstyle{definition}
\newtheorem{remark}[equation]{Remark}
\theoremstyle{remark}
\newcommand{\R}{\mathbb{R}}
\newcommand{\B}{\mathbb{B}}
\newcommand{\uhp}{\mathbb{H}}
\newcounter{alphabet}
\newcounter{minutes}\setcounter{minutes}{\time}
\newcounter{hours}\setcounter{hours}{\time}
\begin{document}
\bibliographystyle{amsplain}
\title
{Inequalities for a new hyperbolic type metric}

\def\thefootnote{}
\footnotetext{
\texttt{\tiny File:~\jobname .tex,
          printed: \number\year-\number\month-\number\day,
          \thehours.\ifnum\theminutes<10{0}\fi\theminutes}
}
\makeatletter\def\thefootnote{\@arabic\c@footnote}\makeatother

\author[O. Rainio]{Oona Rainio}

\keywords{Hyperbolic geometry, hyperbolic metric, hyperbolic type metrics, triangular ratio metric}
\subjclass[2020]{Primary 51M10; Secondary 51M16, 30C62}
\begin{abstract}
We study a new hyperbolic type metric recently introduced by Song and Wang. We present formulas for it in the upper half-space and the unit ball domains and find its sharp inequalities with the hyperbolic metric and the triangular ratio metric. We also improve existing ball inclusion results and give bounds for the distortion of this new metric under conformal and quasiregular mappings.
\end{abstract}
\maketitle

\noindent Oona Rainio$^1$, email: \texttt{ormrai@utu.fi}, ORCID: 0000-0002-7775-7656,\newline 
1: University of Turku, FI-20014 Turku, Finland\\
\textbf{Funding.} My research was funded by Magnus Ehrnrooth Foundation and the Finnish Cultural Foundation.\\
\textbf{Acknowledgements.} I am thankful for Professor Matti Vuorinen for his constructive comments related to this work. 
\\
\textbf{Data availability statement.} Not applicable, no new data was generated.\\
\textbf{Conflict of interest statement.} There is no conflict of interest.


\section{Introduction}

Due to is conformal invariance, the hyperbolic metric is a useful tool for studying different types of mappings in geometric function theory but this metric is not properly defined in higher dimensions outside the most simple domains of the upper half-space and the unit ball. For this reason, researchers have introduced several hyperbolic type metrics which share some of the properties of the hyperbolic metric but have a very easy definition \cite{g79,h,hkv,inm,z24}. The key feature is that the metrics measure the distance between two points by taking into account the position of the points with respect to each other and the boundary of their domain.

Typically, the hyperbolic metrics are defined by using a quotient where the numerator is the Euclidean distance of the two points and the denominator depends on their distance from the boundary. For instance, the distance ratio metric introduced by Gehring and Osgood \cite{g79} uses the minimum of the Euclidean distances from the two points to the boundary the as the denominator. However, this definition leads to one of the differences from the hyperbolic metric: If we rotate the point further away from the boundary around the other point so that the center of the rotation stays the point closer to the boundary, the value of the distance ratio metric does not change at all. Given this type of a transformation would affect the hyperbolic distance between the points, there is an interest in the hyperbolic type metrics that actually consider how the both points are positioned with respect to the boundary.

Recently, the following new hyperbolic type metric was introduced by Song and Wang \cite{s24}: In a domain $G\subsetneq\R^n$ whose boundary is denoted by $\partial G$, define the function $\tilde{c}_G:G\times G\to[0,\infty)$ as
\begin{align}
\tilde{c}_G(x,y)=\frac{|x-y|}{\inf_{z\in\partial G}\max\{|x-z|,|y-z|\}},\quad x,y\in G.    
\end{align}
Given the combination of infimum and maximum in the denominator, it is clear that this function is affected by the distance from the both points $x$ and $y$ to a same boundary point $z$. By \cite[Cor. 1]{s24}, this function $\tilde{c}_G$ is also truly a metric.

In this article, we continue the study of this metric. We give formulas for computing the value of this new metric in the upper half-space and the unit disk. We find the best possible constants for the inequalities between this new metric and both the hyperbolic metric and the triangular ratio metric. We use these inequalities formulate the ball inclusion results, which improve the ones found in \cite{s24}. We also present results about the distortion of the metric $\tilde{c}$ under conformal and quasiregular mappings. 

\section{Preliminaries}

For a point $x\in\R^n$, denote its $j$th coordinate by $x_j$ so that $x=(x_1,...,x_n)$. The upper half-space can now be defined as $\uhp^n=\{x\in\R^n\,:\,x_n>0\}$. Denote also the unit vectors of $\R^n$ by $e_1,...,e_n$. Denote the Euclidean open ball with a center $x\in\R^n$ and a radius $r>0$ by $B^n(x,r)$ and let $S^{n-1}(x,r)$ be its boundary sphere. Use the simplified notation $\B^n$ for the unit ball $B^n(0,1)$. Denote the line segment between $x,y\in\R^n$ by $[x,y]$. For two distinct points $x,y\in\R^n\setminus\{0\}$, let $\measuredangle XOY$ be the magnitude of the angle formed between the vectors from the origin to the points $x$ and $y$ so that $\measuredangle XOY\leq\pi$. 

By denoting the hyperbolic sine, cosine and tangent by sh, ch, and th, respectively, we can define the hyperbolic metric as \cite[(4.8), p. 52 \& (4.14), p. 55]{hkv}
\begin{align*}
\text{ch}\rho_{\uhp^n}(x,y)&=1+\frac{|x-y|^2}{2x_ny_n},\quad x,y\in\uhp^n,\\
\text{sh}^2\frac{\rho_{\B^n}(x,y)}{2}&=\frac{|x-y|^2}{(1-|x|^2)(1-|y|^2)},\quad x,y\in\B^n.
\end{align*}
From this, we can solve that
\begin{align}\label{for_hyphn}
{\rm th}\frac{\rho_{\uhp^n}(x,y)}{2}=\frac{|x-y|}{\sqrt{|x-y|^2+4x_ny_n}},
\quad
{\rm th}\frac{\rho_{\B^n}(x,y)}{2}=\frac{|x-y|}{\sqrt{|x-y|^2+(1-|x|^2)(1-|y|^2)}}.   
\end{align}
For a domain $G\subsetneq\R^n$, define then the distance ratio metric $j_G:G\times G\to[0,\infty)$ as \cite[p. 685]{chkv},
\begin{align}
j_G(x,y)=\log\left(1+\frac{|x-y|}{\min\{d_G(x),d_G(y)\}}\right)  
\end{align}
and the $j^*$-metric $j^*_G:G\times G\to[0,1],$ as \cite[2.2, p. 1123 \& Lemma 2.1, p. 1124]{hvz}
\begin{align}\label{def_j}
j^*_G(x,y)={\rm th}\frac{j_G(x,y)}{2}=\frac{|x-y|}{|x-y|+2\min\{d_G(x),d_G(y)\}}.    
\end{align}
The triangular ratio metric introduced by H\"ast\"o \cite{h} and studied in \cite{sch,fss,sinb} is defined as $s_G:G\times G\to[0,1],$ \cite[(1.1), p. 683]{chkv} 
\begin{align*}
s_G(x,y)=\frac{|x-y|}{\inf_{z\in\partial G}(|x-z|+|z-y|)}. 
\end{align*}

For a hyperbolic type metric $d\in\{\tilde{c},s,j,\rho\}$, define its balls in a domain $G\subsetneq\R^n$ so that $B_d(x,r)=\{y\in G\,:\,d_G(x,y)<r\}$ where $x\in\R^n$ and $r>0$.

\begin{theorem}\label{thm_sjbounds}
\emph{\cite[Lemma 2.1, p. 1124; Lemma 2.2, p. 1125; Lemma 2.8 \& Thm 2.9(1), p. 1129]{hvz}} For all $x,y\in G\subsetneq\R^n$, the inequality $j^*_G(x,y)\leq s_G(x,y)\leq2j^*_G(x,y)$ holds and, if $G$ is convex, then $s_G(x,y)\leq\sqrt{2}j^*_G(x,y)$.
\end{theorem}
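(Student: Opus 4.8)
The plan is to reduce all three claimed inequalities to comparisons between the two denominators, since $j^*_G(x,y)=|x-y|/D_j$ and $s_G(x,y)=|x-y|/D_s$ with $D_j=|x-y|+2\min\{d_G(x),d_G(y)\}$ and $D_s=\inf_{z\in\partial G}(|x-z|+|z-y|)$. Writing $a=|x-y|$ and $d=\min\{d_G(x),d_G(y)\}$, and assuming $x\neq y$ (the case $x=y$ being trivial), the chain $j^*_G\le s_G\le 2j^*_G$ is equivalent to $\tfrac12(a+2d)\le D_s\le a+2d$, while the convex estimate $s_G\le\sqrt2\,j^*_G$ is equivalent to $D_s\ge (a+2d)/\sqrt2$. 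Thus everything comes down to bounding the single quantity $D_s$ from above and below in terms of $a$ and $d$, and no attainment of the infimum is needed since I will argue pointwise in $z$ and take the infimum only at the end.

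For the upper bound $D_s\le a+2d$, I would assume $d_G(x)\le d_G(y)$ so that $d=d_G(x)$, take a boundary point $z_0$ with $|x-z_0|$ as close as we like to $d_G(x)$, and use $|z_0-y|\le|x-z_0|+|x-y|$ to obtain $|x-z_0|+|z_0-y|\le 2|x-z_0|+a\to a+2d$; taking the infimum gives $D_s\le a+2d$, hence $j^*_G\le s_G$. For the lower bound $D_s\ge\tfrac12(a+2d)$, I would note that every $z\in\partial G$ satisfies both $|x-z|+|z-y|\ge|x-y|=a$ by the triangle inequality and $|x-z|+|z-y|\ge d_G(x)+d_G(y)\ge 2d$ by the definition of $d_G$; adding these and halving gives $|x-z|+|z-y|\ge\tfrac12(a+2d)$, so that $D_s\ge\tfrac12(a+2d)$ and $s_G\le 2j^*_G$. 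These two steps are routine.

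The main work is the convex case, which yields the sharper constant $\sqrt2$. Here I would fix an arbitrary $z\in\partial G$ and invoke convexity to produce a supporting hyperplane $P=P_z$ through $z$ with $G$ contained in one of its closed half-spaces $H$. The inscribed-ball inclusion $B^n(x,d_G(x))\subseteq G\subseteq H$ forces $d_G(x)\le\operatorname{dist}(x,P)$ and likewise $d_G(y)\le\operatorname{dist}(y,P)$, so both distances to $P$ are at least $d$. Since $z\in P$, we have $|x-z|+|z-y|\ge\inf_{w\in P}(|x-w|+|w-y|)$, and the reflection principle evaluates this planar infimum exactly: reflecting $y$ across $P$ to $y^*$ gives $\inf_{w\in P}(|x-w|+|w-y|)=|x-y^*|=\sqrt{|x-y|^2+4\operatorname{dist}(x,P)\operatorname{dist}(y,P)}\ge\sqrt{a^2+4d^2}$. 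Taking the infimum over $z\in\partial G$ then yields $D_s\ge\sqrt{a^2+4d^2}$, and the elementary bound $\sqrt{a^2+4d^2}\ge(a+2d)/\sqrt2$, equivalent to $(a-2d)^2\ge0$, completes the proof. I expect the reflection identity together with the estimate $\operatorname{dist}(x,P)\ge d_G(x)$ to be the only genuinely nontrivial points; the remaining manipulations are direct.
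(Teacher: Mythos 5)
Your proof is correct. Note that the paper itself gives no proof of this statement: it is imported verbatim from Hariri--Vuorinen--Zhang \cite{hvz}, so there is no in-paper argument to compare against. Your reconstruction is a valid, self-contained proof, and it follows essentially the standard route used in the cited source: the first two inequalities come from the triangle inequality and the bound $|x-z|+|z-y|\ge d_G(x)+d_G(y)$ applied to the denominators, and the convex case reduces, via a supporting hyperplane at each boundary point, to the exact half-space computation $\inf_{w\in P}(|x-w|+|w-y|)=\sqrt{|x-y|^2+4\operatorname{dist}(x,P)\operatorname{dist}(y,P)}$, which is precisely the formula behind $s_{\uhp^n}$. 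All the individual steps check out: $B^n(x,d_G(x))\subseteq G\subseteq H$ does give $\operatorname{dist}(x,P)\ge d_G(x)$, the reflection identity applies because $x$ and $y$ lie on the same side of $P$ (both are in $H$), and $\sqrt{a^2+4d^2}\ge (a+2d)/\sqrt{2}$ is indeed equivalent to $(a-2d)^2\ge 0$. The only cosmetic remark is that in the upper bound $D_s\le a+2d$ you should phrase the limit as an infimum over a minimizing sequence of boundary points (as you in fact do), since the nearest boundary point need not be attained in an arbitrary domain; this is handled correctly in your write-up.
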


\section{Results}

\begin{lemma}\label{lem_csG}
For all points $x,y$ in any domain $G\subsetneq\R^n$, the inequality $s_G(x,y)\leq\tilde{c}_G(x,y)\leq2s_G(x,y)$ holds and has the best constants regardless of the exact choice of $G\subsetneq\R^n$ and $x\in G$.    
\end{lemma}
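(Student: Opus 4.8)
The plan is to reduce both inequalities to a single elementary comparison of the denominators and then, for arbitrary $G$ and $x$, to exhibit two one-parameter families of points $y$ along which the ratio $\tilde{c}_G(x,y)/s_G(x,y)$ tends to $1$ and to $2$ respectively.

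First I would note that $\tilde{c}_G$ and $s_G$ share the numerator $|x-y|$ (the case $x=y$ being trivial), so the whole statement is governed by the two denominators
\[
D_c=\inf_{z\in\partial G}\max\{|x-z|,|y-z|\},\qquad D_s=\inf_{z\in\partial G}(|x-z|+|z-y|).
\]
For nonnegative reals one has the trivial chain $\max\{a,b\}\le a+b\le 2\max\{a,b\}$. Applying this with $a=|x-z|$ and $b=|y-z|$ and taking the infimum over $z\in\partial G$, which is a monotone operation, yields $D_c\le D_s\le 2D_c$. Dividing the positive number $|x-y|$ by these quantities reverses the inequalities and gives $s_G(x,y)\le\tilde{c}_G(x,y)\le 2s_G(x,y)$ at once.

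For the sharpness of the upper constant $2$, fix $G$ and $x$ and let $y\to x$. Since $\bigl||y-z|-|x-z|\bigr|\le|x-y|$ uniformly in $z$, the two infima satisfy $|D_c-d_G(x)|\le|x-y|$ and $|D_s-2d_G(x)|\le|x-y|$, so that $D_c\to d_G(x)$ and $D_s\to 2d_G(x)$. Hence $\tilde{c}_G(x,y)/s_G(x,y)=D_s/D_c\to 2$, and no constant smaller than $2$ can work. For the sharpness of the lower constant $1$, again fix $G$ and $x$, choose a nearest boundary point $w_0\in\partial G$ with $|x-w_0|=d_G(x)$ (such a point exists because $\partial G$ is closed and the infimum may be taken over a compact truncation), and let $y\to w_0$ along the segment $[x,w_0]$. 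The estimates $\max\{|x-z|,|y-z|\}\ge|x-z|\ge d_G(x)$ and $|x-z|+|y-z|\ge|x-z|\ge d_G(x)$ give $D_c\ge d_G(x)$ and $D_s\ge d_G(x)$, while the choice $z=w_0$ gives $D_c\le\max\{d_G(x),|y-w_0|\}$ and $D_s\le d_G(x)+|y-w_0|$; as $|y-w_0|\to 0$ both denominators tend to $d_G(x)$. Therefore $\tilde{c}_G(x,y)/s_G(x,y)=D_s/D_c\to 1$, so the lower constant cannot be increased.

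The algebra of the second paragraph is routine; the genuine work is in the two limits, and in particular in pairing the lower bounds $D_c,D_s\ge d_G(x)$ (valid because every boundary point lies at distance at least $d_G(x)$ from $x$) with the upper bounds coming from the single test point $w_0$. The main obstacle I anticipate is verifying that these matching bounds hold for \emph{every} domain and \emph{every} interior point simultaneously, since this is precisely what guarantees that the constants are best possible independently of the choice of $G\subsetneq\R^n$ and $x\in G$.
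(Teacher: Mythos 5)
Your proposal is correct and takes essentially the same route as the paper: the two-sided inequality is reduced to the elementary comparison $\max\{a,b\}\le a+b\le2\max\{a,b\}$ of the denominators, and sharpness comes from letting $y$ approach $x$ (ratio $\to2$) and letting $y$ approach a nearest boundary point along the segment (ratio $\to1$). The paper merely packages both limits into the single family $y=u+k(x-u)$, $k\in(0,1)$, for which the ratio is exactly $1+k$, whereas you obtain the two limits by squeeze estimates.
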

\begin{proof}
Clearly,
\begin{align*}
&(1/2)\cdot\inf_{z\in\partial G}(|x-z|+|z-y|)
\leq1/2\cdot\inf_{z\in\partial G}(\max\{|x-z|,|y-z|\}+\max\{|x-z|,|y-z|\})\\
&=\inf_{z\in\partial G}\max\{|x-z|,|y-z|\}
\leq\inf_{z\in\partial G}(\max\{|x-z|,|y-z|\}+\min\{|x-z|,|y-z|\})\\
&=\inf_{z\in\partial G}(|x-z|+|z-y|),
\end{align*}
so the inequality follows. Let us then prove that there cannot be better constants for any choice of $G$. For $x\in G$, $u\in S^{n-1}(x,d_G(x))\cap\partial G$, and $y=u+k(x-u)$ with $0<k<1$,
\begin{align*}
\frac{\tilde{c}_G(x,y)}{s_G(x,y)}=\frac{|x-u|+|u-y|}{|x-u|}=1+k,    
\end{align*}
which approaches 1 when $k\to0^+$ and 2 when $k\to1^-$.
\end{proof}

\begin{corollary}\label{cor_csinc}
For any point $x$ in a domain $G\subsetneq\R^n$, the ball inclusion
\begin{align*}
B_s(x,R_0)\subseteq B_{\tilde{c}}(x,r)\subseteq B_s(x,R_1)  \end{align*}
holds for all $0<r<1$ if and only if $R_0\leq r/2$ and $R_1\geq r$.
\end{corollary}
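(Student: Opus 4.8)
The plan is to deduce both inclusions from the two-sided estimate $s_G(x,y)\le\tilde{c}_G(x,y)\le2s_G(x,y)$ of Lemma \ref{lem_csG}, reading each ball inclusion as one of the two inequalities. Recall that $B_d(x,\rho)=\{y\in G:d_G(x,y)<\rho\}$, so the left inclusion $B_s(x,R_0)\subseteq B_{\tilde{c}}(x,r)$ is the implication $s_G(x,y)<R_0\Rightarrow\tilde{c}_G(x,y)<r$, while the right inclusion $B_{\tilde{c}}(x,r)\subseteq B_s(x,R_1)$ is $\tilde{c}_G(x,y)<r\Rightarrow s_G(x,y)<R_1$. The ``if'' direction is then purely formal and the ``only if'' direction is where all the content sits.

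For the sufficiency, suppose $R_0\le r/2$ and $R_1\ge r$. If $s_G(x,y)<R_0$, then by Lemma \ref{lem_csG} we have $\tilde{c}_G(x,y)\le2s_G(x,y)<2R_0\le r$, so $y\in B_{\tilde{c}}(x,r)$, giving the left inclusion; if instead $\tilde{c}_G(x,y)<r$, then $s_G(x,y)\le\tilde{c}_G(x,y)<r\le R_1$, so $y\in B_s(x,R_1)$, giving the right inclusion. Both steps use only the inequality of Lemma \ref{lem_csG} and are valid for every $G$ and $x$.

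For the necessity I would argue that neither constant can be relaxed, using the extremal configurations behind the sharpness part of Lemma \ref{lem_csG}. To block $R_0>r/2$ I would produce $y$ with $s_G(x,y)<R_0$ yet $\tilde{c}_G(x,y)\ge r$: placing $y$ symmetrically with respect to a flat boundary portion, so that one boundary point is equidistant from $x$ and $y$ and realizes both infima, yields the ratio-$2$ extremal $\tilde{c}_G(x,y)=2s_G(x,y)$, and adjusting the separation $|x-y|$ arranges $\tilde{c}_G(x,y)=r$ with $s_G(x,y)=r/2<R_0$, contradicting the left inclusion. To block $R_1<r$ I would instead seek $y$ with $\tilde{c}_G(x,y)<r$ but $s_G(x,y)$ close to $r$, i.e.\ the opposite extremal where $\tilde{c}_G/s_G$ is close to $1$, as for the family $y=u+k(x-u)$ with $u\in S^{n-1}(x,d_G(x))\cap\partial G$ and $k\to0^+$.

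The main obstacle is exactly this second calibration. For the family $y=u+k(x-u)$ one has $\tilde{c}_G(x,y)/s_G(x,y)=1+k$, so the ratio tends to $1$ only as $k\to0^+$, where simultaneously $\tilde{c}_G(x,y)\to1$; hence this family does \emph{not} directly supply points with $\tilde{c}_G(x,y)<r$ and $s_G(x,y)$ near $r$ for a prescribed $r<1$, and the same family taken at the level $\tilde{c}_G(x,y)=r$ only gives $s_G(x,y)=r/(2-r)$. Thus the delicate point is to control how $s_G(x,y)$ approaches its supremum over the set $\{\,\tilde{c}_G(x,y)<r\,\}$, and to confirm that this supremum is indeed the claimed value rather than a strictly smaller quantity forced by the joint behaviour of the two metrics. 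Pinning down that supremum, either by a further extremal construction or by a sharp refinement of the bound $s_G\le\tilde{c}_G$, is the one step that genuinely goes beyond quoting Lemma \ref{lem_csG}.
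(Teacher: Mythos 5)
Your sufficiency argument coincides with the paper's proof: both inclusions are read off from $s_G\le\tilde c_G\le 2s_G$ of Lemma \ref{lem_csG}, and that half is correct. The obstacle you flag for the necessity of $R_1\ge r$, however, is not a missing calibration that a cleverer extremal example would supply --- it is fatal to the ``only if'' direction read at a fixed radius $r$. For every $z\in\partial G$ one has $|x-z|+|z-y|=\max\{|x-z|,|y-z|\}+\min\{|x-z|,|y-z|\}\ge 2\max\{|x-z|,|y-z|\}-|x-y|\ge 2M-|x-y|$, where $M=\inf_{z\in\partial G}\max\{|x-z|,|y-z|\}$, so
\begin{align*}
s_G(x,y)\le\frac{|x-y|}{2M-|x-y|}=\frac{\tilde c_G(x,y)}{2-\tilde c_G(x,y)}
\end{align*}
whenever $\tilde c_G(x,y)<2$. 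Hence $B_{\tilde c}(x,r)\subseteq B_s(x,r/(2-r))$ with $r/(2-r)<r$ for $0<r<1$: the supremum you were trying to control really equals $r/(2-r)$ (the value your family $y=u+k(x-u)$ attains in the limit), and $R_1\ge r$ is not necessary for any $(G,x)$. The left inclusion has the same defect: the ratio-$2$ symmetric configuration at the level $\tilde c_G(x,y)=r$ that you propose does not exist in every domain. For $G=\B^n$ and $x=0$ one computes $\tilde c_{\B^n}(0,y)=|y|$ and $s_{\B^n}(0,y)=|y|/(2-|y|)$, so $B_s(0,R_0)\subseteq B_{\tilde c}(0,r)$ holds precisely for $R_0\le r/(2-r)$, which exceeds $r/2$; thus $R_0\le r/2$ is not necessary either.

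The paper's own proof does not resolve this: it proves the inclusions exactly as you do and then asserts necessity by appealing to the sharpness of the constants $1$ and $2$ in Lemma \ref{lem_csG} ``for an unspecified value of $0<r<1$''. The only reading under which the necessity survives is that $R_0=c_0r$ and $R_1=c_1r$ are fixed multiples of $r$ required to work for \emph{every} $r\in(0,1)$ simultaneously. Under that reading the single family $y=u+k(x-u)$, for which $\tilde c_G(x,y)=1-k$ and $s_G(x,y)=(1-k)/(1+k)$, already forces $c_1\ge 1/(2-r)$ and $c_0\le 1/(2-r)$ for each admissible $r$, whence $c_1\ge 1$ as $r\to1^-$ and $c_0\le 1/2$ as $r\to0^+$; no ratio-$2$ configuration is needed (nor, as the ball example shows, available in general). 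So your proposal is correct exactly where the paper is, and the gap you identified is genuine: as a statement about a single fixed $r$, the ``only if'' part cannot be proved because it is false.
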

\begin{proof}
By Lemma \ref{lem_csG}, we have 
\begin{align*}
B_s(x,r/2)=\{y\in G\,:\,s_G(x,y)<r/2\}\subseteq\{y\in G\,:\,\tilde{c}_G(x,y)<r\}=B_{\tilde{c}}(x,r)\subseteq B_s(x,r)    
\end{align*}  
and, since Lemma \ref{lem_csG} has the best possible constants for any choices of $G\subsetneq\R^n$ and $x\in G$, the radii $r/2$ and $r$ are the best possible ones here for an unspecified value of $0<r<1$.
\end{proof}

\begin{remark}
Corollary \ref{cor_csinc} improves the earlier inclusion result \cite[Cor. 2]{s24}.  
\end{remark}

\begin{lemma}\label{lem_cjG}
For all points $x,y$ in any domain $G\subsetneq\R^n$, the inequality $j^*_G(x,y)\leq\tilde{c}_G(x,y)\leq4j^*_G(x,y)$ holds. The inequality $j^*_G(x,y)\leq\tilde{c}_G(x,y)$ has always the best constant regardless of the exact choice of $G\subsetneq\R^n$ and $x\in G$, and the inequality $\tilde{c}_G(x,y)\leq4j^*_G(x,y)$ has the best constant in the case $G=\R^n\setminus\{0\}$. If $G$ is convex, then $\tilde{c}_G(x,y)\leq2\sqrt{2}j^*_G(x,y)$.    
\end{lemma}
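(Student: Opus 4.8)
The plan is to obtain both inequalities at once by chaining the two facts already available, so that no genuinely new inequality work is needed and everything reduces to the sharpness claims. First I would combine Lemma \ref{lem_csG}, which gives $s_G(x,y)\leq\tilde{c}_G(x,y)\leq2s_G(x,y)$, with Theorem \ref{thm_sjbounds}, which gives $j^*_G(x,y)\leq s_G(x,y)\leq2j^*_G(x,y)$. The lower bound is then immediate from $j^*_G(x,y)\leq s_G(x,y)\leq\tilde{c}_G(x,y)$, and the upper bound follows from $\tilde{c}_G(x,y)\leq2s_G(x,y)\leq4j^*_G(x,y)$. For convex $G$ the sharper estimate $s_G(x,y)\leq\sqrt{2}j^*_G(x,y)$ of Theorem \ref{thm_sjbounds} yields $\tilde{c}_G(x,y)\leq2s_G(x,y)\leq2\sqrt{2}j^*_G(x,y)$ directly.

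To show that the constant $1$ in $j^*_G\leq\tilde{c}_G$ cannot be improved for any $G$ and any $x$, I would reuse the radial configuration from the proof of Lemma \ref{lem_csG}: pick a nearest boundary point $u\in S^{n-1}(x,d_G(x))\cap\partial G$ and set $y=u+k(x-u)$ with $0<k<1$, writing $d=d_G(x)$, so that $|x-y|=(1-k)d$. The key is to pin down the two denominators. Since $B^n(y,kd)\subseteq B^n(x,d)\subseteq G$ while $u\in\partial G$ lies on $S^{n-1}(y,kd)$, one gets $d_G(y)=kd$; and since $|x-z|\geq d$ for every $z\in\partial G$ with equality at $z=u$, one gets $\inf_{z\in\partial G}\max\{|x-z|,|y-z|\}=d$. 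This gives $\tilde{c}_G(x,y)=1-k$ and $j^*_G(x,y)=(1-k)/(1+k)$, whence $\tilde{c}_G(x,y)/j^*_G(x,y)=1+k\to1$ as $k\to0^+$.

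For the upper constant I would specialise to $G=\R^n\setminus\{0\}$, where $\partial G=\{0\}$ makes both quantities explicit: $\tilde{c}_G(x,y)=|x-y|/\max\{|x|,|y|\}$ and $j^*_G(x,y)=|x-y|/(|x-y|+2\min\{|x|,|y|\})$, so that $\tilde{c}_G(x,y)/j^*_G(x,y)=(|x-y|+2\min\{|x|,|y|\})/\max\{|x|,|y|\}$. Taking $x$ and $y$ antipodal on a sphere centred at the origin, say $y=-x$, gives $|x-y|=2|x|$ and $\min\{|x|,|y|\}=\max\{|x|,|y|\}=|x|$, so the ratio equals exactly $4$; thus the constant $4$ is attained and hence best possible for this domain. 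I expect the main obstacle to be the two denominator identifications in the general lower-bound example: specifically, verifying $d_G(y)=kd$ exactly (via the internal tangency of $B^n(y,kd)$ and $B^n(x,d)$ at $u$) rather than merely $d_G(y)\leq kd$, and confirming that $z=u$ realises the infimum in the $\tilde{c}_G$ denominator. Once these are secured, the remaining computations are routine.
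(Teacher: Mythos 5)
Your proposal is correct and follows essentially the same route as the paper: chaining Lemma \ref{lem_csG} with Theorem \ref{thm_sjbounds} for all three inequalities, using the radial configuration $y=u+k(x-u)$ with $k\to0^+$ for sharpness of the lower constant, and the antipodal pair in $\R^n\setminus\{0\}$ for attainment of the constant $4$. Your explicit verification that $d_G(y)=kd$ and that $z=u$ realises the infimum is slightly more detailed than the paper's version (which phrases the limit via $s_G$), but the underlying examples and computations are identical.
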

\begin{proof}
The inequalities follow from Theorem \ref{thm_sjbounds} and Lemma \ref{lem_csG}. If $x\in G$, \newline $u\in S^{n-1}(x,d_G(x))\cap\partial G$, and $y=u+k(x-u)$ with $k\to0^+$,
\begin{align*}
\frac{\tilde{c}_G(x,y)}{s_G(x,y)}=\frac{|x-u|+|u-y|}{|x-u|}=1+k\to1.    
\end{align*}
For $x=e_1$ and $y=-e_1$,
\begin{align*}
\frac{\tilde{c}_{\R^n\setminus\{0\}}(x,y)}{j^*_{\R^n\setminus\{0\}}(x,y)}=\frac{2+2}{1}=4.    
\end{align*}
\end{proof}

\begin{theorem}\label{thm_chn}
For all $x,y\in\uhp^n$,
\begin{align*}
\tilde{c}_{\uhp^n}(x,y)=
\begin{cases}
\dfrac{|x-y|}{\max\{x_n,y_n\}}&\text{if}\quad|x-y|\leq\sqrt{(x_n-y_n)^2+|x_n^2-y_n^2|},\\
\vspace{1pt}\\
2\sqrt{\dfrac{|x-y|^2-(x_n-y_n)^2}{|x-y|^2+4x_ny_n}}&\text{otherwise.}    
\end{cases}
\end{align*}
\end{theorem}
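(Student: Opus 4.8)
The plan is to reduce the infimum in the denominator of $\tilde c_{\uhp^n}(x,y)$ to an elementary one‑variable minimax problem. Since $\partial\uhp^n=\R^{n-1}\times\{0\}$, I would write $x=(\bar x,x_n)$ and $y=(\bar y,y_n)$ with $\bar x,\bar y\in\R^{n-1}$, so that a boundary point has the form $z=(\bar z,0)$ and $\max\{|x-z|,|y-z|\}$ depends on $\bar z$ only through $|\bar x-\bar z|$ and $|\bar y-\bar z|$. Projecting $\bar z$ orthogonally onto the line through $\bar x$ and $\bar y$ decreases (or leaves unchanged) both of these distances, so the infimum is realized with $\bar z$ on that line. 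Choosing coordinates so that $\bar x=0$, $\bar y=d\,e_1$ with $d=|\bar x-\bar y|\ge0$, and $\bar z=s\,e_1$, the problem becomes
\[
\Big(\inf_{z\in\partial\uhp^n}\max\{|x-z|,|y-z|\}\Big)^2=\min_{s\in\R}\max\{\,s^2+x_n^2,\ (s-d)^2+y_n^2\,\}.
\]

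Next I would analyze this minimax directly. Writing $F(s)=s^2+x_n^2$ and $G(s)=(s-d)^2+y_n^2$, the difference $F(s)-G(s)=2ds-d^2+x_n^2-y_n^2$ is affine and increasing in $s$, vanishing at $s^\ast=(d^2+y_n^2-x_n^2)/(2d)$; hence $F\ge G$ for $s\ge s^\ast$ and $F\le G$ otherwise. A short check shows $s^\ast\in(0,d)$ precisely when $d^2>|x_n^2-y_n^2|$. When $d^2\le|x_n^2-y_n^2|$, the crossing point lies outside $(0,d)$, the maximum on the relevant range equals the distance to the point of greater height, and its minimum is attained at the foot of that point; this gives $\inf=\max\{x_n,y_n\}$ and hence the first branch. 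When $d^2>|x_n^2-y_n^2|$, the minimum of the maximum is attained at $s=s^\ast$, where $|x-z|=|y-z|$.

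The computational heart is then evaluating $F(s^\ast)=(s^\ast)^2+x_n^2$ in closed form. Substituting $s^\ast$ and clearing denominators, I expect the numerator $(d^2+y_n^2-x_n^2)^2+4x_n^2d^2$ to factor as $(d^2+(x_n-y_n)^2)(d^2+(x_n+y_n)^2)$, so that
\[
F(s^\ast)=\frac{(d^2+(x_n-y_n)^2)\,(d^2+(x_n+y_n)^2)}{4d^2}.
\]
Recognizing $d^2+(x_n-y_n)^2=|x-y|^2$ and $d^2+(x_n+y_n)^2=|x-y|^2+4x_ny_n$, together with $d^2=|x-y|^2-(x_n-y_n)^2$, then yields the second branch after taking square roots.

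Finally I would translate the dichotomy back into the stated condition: squaring $|x-y|\le\sqrt{(x_n-y_n)^2+|x_n^2-y_n^2|}$ and using $|x-y|^2=d^2+(x_n-y_n)^2$ shows it is equivalent to $d^2\le|x_n^2-y_n^2|$, matching the case split above (one also checks the two branches agree at the transition $d^2=|x_n^2-y_n^2|$). The main obstacle I anticipate is not the algebra but the careful justification of the minimax reduction—verifying that the infimum over the whole hyperplane genuinely reduces to the one‑dimensional problem and that the minimizer is correctly located in each regime, in particular handling the limiting behaviour when $s^\ast$ coincides with $0$ or $d$.
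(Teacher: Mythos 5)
Your proposal is correct and follows essentially the same route as the paper's proof: both reduce the infimum to a one-dimensional minimax along the line through the boundary projections of $x$ and $y$, split into cases according to whether the equalizing point $|x-z|=|y-z|$ lies in the relevant range (your condition $d^2>|x_n^2-y_n^2|$ is exactly the paper's condition \eqref{ine_chn} in disguise), and evaluate the equalized distance in closed form via the same factorization. Your treatment of the minimax via the affine difference $F-G$ is a slightly more explicit version of the paper's monotonicity argument, but the substance is identical.
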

\begin{proof}
Denote $x'=(x_1,...,x_{n-1},0)$ and $y'=(y_1,...,y_{n-1},0)$. In the special case $x'=y'$, the infimum $\inf_{z\in\partial\uhp^n}\max\{|x-z|,|y-z|\}$ is trivially found by choosing $z=x'=y'$ and we have $\tilde{c}_{\uhp^n}(x,y)=|x-y|/\max\{x_n,y_n\}$. In this case, we have 
\begin{align*}
|x-y|=|x_n-y_n|\leq\sqrt{(x_n-y_n)^2+|x_n^2-y_n^2|},    
\end{align*}
so the condition in the cases in the theorem is fulfilled.

Suppose then $x'\neq y'$. For any $u\in\partial\uhp^n\setminus[x',y']$, 
\begin{align*}
\max\{|x-v|,|y-v|\}<\max\{|x-u|,|y-u|\}    
\end{align*}
where $v$ is the closest point to $u$ on the line segment $[x',y']$. It follows from this that the infimum $\inf_{z\in\partial\uhp^n}\max\{|x-z|,|y-z|\}$ is obtained when $z\in[x',y']$ or, equivalently, $z=x'+k(y'-x')$ for some $0\leq k\leq1$. 

If $\max\{x_n,y_n\}\geq\min\{|x-y'|,|y-x'|\}$ or, equivalently, 
\begin{align}\label{ine_chn}
\max\{x_n,y_n\}\geq\sqrt{|x'-y'|^2+\min\{x_n,y_n\}^2}
\quad\Leftrightarrow\quad
|x-y|\leq\sqrt{(x_n-y_n)^2+|x_n^2-y_n^2|},
\end{align}
then the infimum $\inf_{z\in\partial\uhp^n}\max\{|x-z|,|y-z|\}$ is found at either $x'$ or $y'$ and we have $\tilde{c}_{\uhp^n}(x,y)=|x-y|/\max\{x_n,y_n\}$.

Suppose then the inequality \eqref{ine_chn} does not hold. Now, there is a value of $0<k<1$ such that $|x-z|=|y-z|$ with $z=x'+k(y'-x')$. Since $|x-z|$ is increasing with respect to $k$ and $|y-z|$ is decreasing with respect to $k$, the infimum $\inf_{z\in\partial\uhp^n}\max\{|x-z|,|y-z|\}$ is found at the point where this equality holds. We can solve that
\begin{align*}
&|x-z|=|y-z|
\quad\Leftrightarrow\quad
\sqrt{k^2|x'-y'|^2+x_n^2}=\sqrt{(1-k)^2|x'-y'|^2+y_n^2}\\
&\Leftrightarrow\quad
k=\frac{|x'-y'|^2-x_n^2+y_n^2}{2|x'-y'|^2}\\
&\Leftrightarrow\quad
|x-z|
=\frac{\sqrt{(|x'-y'|^2+x_n^2+y_n^2)^2-4x_n^2y_n^2}}{2|x'-y'|}
=\frac{|x-y|}{2}\sqrt{\frac{|x-y|^2+4x_ny_n}{|x-y|^2-(x_n-y_n)^2}}\\
&\Rightarrow\quad
\tilde{c}_{\uhp^n}(x,y)=2\sqrt{\frac{|x-y|^2-(x_n-y_n)^2}{|x-y|^2+4x_ny_n}}.
\end{align*}
The theorem follows.
\end{proof}

\begin{corollary}\label{cor_chyph}
For all $x,y\in\uhp^n$, the inequality
\begin{align*}
{\rm th}\frac{\rho_{\uhp^n}(x,y)}{2}\leq\tilde{c}_{\uhp^n}(x,y)\leq2\,{\rm th}\frac{\rho_{\uhp^n}(x,y)}{2}    
\end{align*}
holds and has the best possible constants.
\end{corollary}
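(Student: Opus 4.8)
The plan is to reduce the two inequalities to Lemma \ref{lem_csG} by first identifying $\th\frac{\rho_{\uhp^n}(x,y)}{2}$ with the triangular ratio metric $s_{\uhp^n}(x,y)$, and then to read the sharpness off the extremal family already used in that lemma. First I would evaluate the infimum $\inf_{z\in\partial\uhp^n}(|x-z|+|z-y|)$ appearing in the definition of $s_{\uhp^n}$. Since $\partial\uhp^n$ is the hyperplane $\{z_n=0\}$, the reflected point $\bar y=(y_1,\dots,y_{n-1},-y_n)$ satisfies $|z-y|=|z-\bar y|$ for every boundary point $z$; hence $|x-z|+|z-y|=|x-z|+|z-\bar y|\geq|x-\bar y|$, with equality exactly when $z$ lies on the segment $[x,\bar y]$, which crosses the boundary because $x_n>0>-y_n$. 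Expanding $|x-\bar y|^2=|x-y|^2+4x_ny_n$, the infimum equals $\sqrt{|x-y|^2+4x_ny_n}$, so comparison with \eqref{for_hyphn} yields the identity $s_{\uhp^n}(x,y)=\th\frac{\rho_{\uhp^n}(x,y)}{2}$.

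Granting this identity, both inequalities follow immediately by applying Lemma \ref{lem_csG} with $G=\uhp^n$, which gives $s_{\uhp^n}(x,y)\leq\tilde{c}_{\uhp^n}(x,y)\leq2s_{\uhp^n}(x,y)$.

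For the sharpness I would reuse the one-parameter family from the proof of Lemma \ref{lem_csG}, checking only that it lives in $\uhp^n$: take $x=he_n$ with $h>0$, the nearest boundary point $u=0$, and $y=u+k(x-u)=khe_n$ with $0<k<1$. As computed there, $\tilde{c}_{\uhp^n}(x,y)/s_{\uhp^n}(x,y)=1+k$, so by the identity above $\tilde{c}_{\uhp^n}(x,y)/\th\frac{\rho_{\uhp^n}(x,y)}{2}=1+k$, which tends to $1$ as $k\to0^+$ and to $2$ as $k\to1^-$; thus neither constant can be improved.

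I expect the only delicate point to be the reflection evaluation of the infimum, together with the observation that the minimizing $z$ really lies on the boundary (consistent with the reduction to $z\in[x',y']$ in the proof of Theorem \ref{thm_chn}); everything after the identity is immediate. A self-contained alternative would bypass the $s$-metric and verify the two bounds directly from the two-branch formula of Theorem \ref{thm_chn} against \eqref{for_hyphn}, each branch reducing to an elementary polynomial inequality — for the upper bound one uses $|x-y|^2-(x_n-y_n)^2=|x'-y'|^2\leq|x-y|^2$ — but routing through Lemma \ref{lem_csG} is considerably shorter.
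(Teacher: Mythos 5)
Your proof is correct and follows essentially the same route as the paper, which likewise deduces both bounds from Lemma \ref{lem_csG} via the identity $s_{\uhp^n}(x,y)={\rm th}(\rho_{\uhp^n}(x,y)/2)$ (cited there from the literature rather than re-derived by reflection). Your explicit verification that the extremal family $x=he_n$, $y=khe_n$ lies in $\uhp^n$ is a welcome touch the paper leaves implicit.
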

\begin{proof}
Follows directly from Lemma \ref{lem_csG} because $s_{\uhp^n}(x,y)={\rm th}(\rho_{\uhp^n}(x,y)/2)$ by \cite[p. 460]{hkv}.  
\end{proof}

\begin{theorem}\label{thm_cinb}
For all $x,y\in\B^n$,
\begin{align*}
\tilde{c}_{\B^n}(x,y)=
\begin{cases}
\dfrac{|x-y|}{1-\min\{|x|,|y|\}}&\text{if}\quad\cos(\mu)\geq\dfrac{||x|^2-|y|^2|+2\min\{|x|,|y|\}}{2\max\{|x|,|y|\}},\\
\vspace{1pt}\\
\dfrac{|x-y|}{\sqrt{1+|x|^2-2|x|\cos(k)}}&\text{otherwise,} \end{cases}
\end{align*} 
where $\mu$ is the magnitude of the angle $\measuredangle XOY$ and $k\in(0,\mu)$ is chosen so that $|x|\cos(k)-|y|\cos(\mu-k)=(|x|^2-|y|^2)/2$.
\end{theorem}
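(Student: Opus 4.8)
The plan is to follow the strategy of Theorem~\ref{thm_chn}: compute $\inf_{z\in\partial\B^n}\max\{|x-z|,|y-z|\}$ by first reducing the problem to a circle and then locating the minimizing boundary point. The key simplification is that for every $z\in S^{n-1}=\partial\B^n$ one has
\[
|x-z|^2=1+|x|^2-2\,x\cdot z,\qquad |y-z|^2=1+|y|^2-2\,y\cdot z,
\]
both of which follow at once from $|z|=1$. Hence the quantity to be minimized,
\[
\max\{|x-z|^2,|y-z|^2\}=\max\{1+|x|^2-2\,x\cdot z,\;1+|y|^2-2\,y\cdot z\},
\]
depends on $z$ only through the inner products $x\cdot z$ and $y\cdot z$, that is, only through the orthogonal projection $z_P$ of $z$ onto the plane $P=\operatorname{span}(x,y)$ (I would treat the degenerate cases $x=0$, $y=0$, or $x,y,0$ collinear separately by direct computation). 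The problem thereby becomes the minimization of a maximum of two affine functions of $z_P$ over the closed unit disk $\{z_P\in P:|z_P|\le1\}$; I would then argue that this minimum is attained on the boundary circle $P\cap S^{n-1}$ (the justification is deferred below), so that the infimum over the sphere is already realized by a point $z\in P\cap S^{n-1}$.

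Next I would work entirely in $P\cong\R^2$, choosing coordinates so that $x=(|x|,0)$, $y=(|y|\cos\mu,|y|\sin\mu)$ and $z=(\cos\theta,\sin\theta)$. Then $|x-z|^2=1+|x|^2-2|x|\cos\theta$ is increasing in $|\theta|$ while $|y-z|^2=1+|y|^2-2|y|\cos(\theta-\mu)$ is increasing in $|\theta-\mu|$. Comparing endpoint values shows that for $\theta\notin[0,\mu]$ both distances exceed their values at the nearer endpoint, so the minimizer lies on the arc $\theta\in[0,\mu]$ joining the radial projections $x^*=x/|x|$ and $y^*=y/|y|$. On this arc $|x-z|$ is increasing and $|y-z|$ is decreasing, so the minimum of their maximum splits into exactly two regimes. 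In the first, one radial projection already dominates, namely $|y-x^*|\le|x-x^*|=1-|x|$ (after relabelling so that $|x|\le|y|$); then the minimizing $z$ is that projection and the infimum equals $1-\min\{|x|,|y|\}$. In the second, the two distances cross at an interior angle $k\in(0,\mu)$ characterized by $|x-z|=|y-z|$, which rearranges to $|x|\cos k-|y|\cos(\mu-k)=(|x|^2-|y|^2)/2$, and the infimum equals the common value $\sqrt{1+|x|^2-2|x|\cos k}$.

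Finally I would translate the dichotomy into the stated condition: squaring $|y-x^*|\le1-|x|$ and using $|y-x^*|^2=1+|y|^2-2|y|\cos\mu$ gives $\cos\mu\ge(|y|^2-|x|^2+2|x|)/(2|y|)$, which is precisely the displayed threshold $\cos(\mu)\ge(||x|^2-|y|^2|+2\min\{|x|,|y|\})/(2\max\{|x|,|y|\})$ once $\min$ and $\max$ are read off (and a short check confirms that the opposite endpoint $y^*$ can only tie in the degenerate limit $\mu=0$, where it also yields $1-\min\{|x|,|y|\}$). Dividing $|x-y|$ by the two infimum values then produces the two cases of the formula. The step I expect to be the main obstacle is the plane reduction: verifying rigorously that the minimum of the affine maximum over the disk is attained on the boundary circle, where one must rule out an interior minimizer. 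Such a minimizer can only arise when $0\in\operatorname{conv}\{-2x,-2y\}$, i.e.\ $0\in[x,y]$, which forces $\mu=\pi$; in that single case a direct computation shows the equidistant point still lies on the circle, so the reduction survives and the argument goes through uniformly.
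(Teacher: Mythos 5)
Your proposal follows essentially the same route as the paper's proof: reduce the infimum to the shorter arc of the unit circle between the radial projections $x/|x|$ and $y/|y|$, note that along this arc $|x-z|$ increases while $|y-z|$ decreases, and split into an endpoint regime (value $1-\min\{|x|,|y|\}$, with the threshold $\cos\mu\ge(||x|^2-|y|^2|+2\min\{|x|,|y|\})/(2\max\{|x|,|y|\})$ derived exactly as you do) and an interior crossing regime (the equation for $k$). Your preliminary reduction to the plane spanned by $x$ and $y$, via $|x-z|^2=1+|x|^2-2\,x\cdot z$ and the convexity of a maximum of affine functions on the projected disk, is in fact \emph{more} careful than the paper, which compresses the whole localization into a single nearest-point assertion; your disposal of the possible interior minimizer ($0\in[x,y]$, forcing $\mu=\pi$) is correct.

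One intermediate claim is literally false, though: it is not true that for $\theta\notin[0,\mu]$ both distances exceed their values at the nearer endpoint. Take $\mu=3\pi/4$ and $\theta=4\pi/3$; the nearer endpoint is $y/|y|$ at angle $3\pi/4$, but the angular separation of $z(\theta)$ from $x/|x|$ is $2\pi/3<3\pi/4$, so $|x-z(\theta)|<|x-y/|y||$, and the other endpoint fails symmetrically. (The paper's proof rests on the same unproved nearest-point assertion, so this is a shared weakness rather than a divergence.) The conclusion that the minimizer lies on the arc is still true and is easily repaired. If the minimum $m^2$ of $\max\{|x-z|^2,|y-z|^2\}$ over the arc is attained at an interior crossing $k$, then the sets $\{z:|x-z|<m\}$ and $\{z:|y-z|<m\}$ are open spherical caps of angular radii $k$ and $\mu-k$ about $x/|x|$ and $y/|y|$, hence disjoint, so no point of $S^{n-1}$ has both distances below $m$; if it is attained at the endpoint $x/|x|$ with value $1-|x|$, then $|x-z|\ge 1-|x|$ for every $z\in S^{n-1}$ trivially. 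Substituting this argument for the endpoint comparison makes your proof complete (and incidentally dispenses with the separate plane reduction).
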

\begin{proof}
In the special case where $x/|x|=y/|y|$, the infimum 
\begin{align*}
\inf_{z\in S^{n-1}(0,1)}\max\{|x-z|,|y-z|\}    
\end{align*}
is obtained when $z=x/|x|=y/|y|$ and it is equal to $1-\min\{|x|,|y|\}$. Suppose then $x/|x|\neq y/|y|$. Let $C$ be such an arc that belongs to an origin-centered circle, has $x/|x|$ and $y/|y|$ as its end points, and has a center angle with magnitude $\mu=\measuredangle XOY\in(0,\pi]$. For all $u\in S^{n-1}(0,1)\setminus C$, 
\begin{align*}
\max\{|x-v|,|y-v|\}<\max\{|x-u|,|y-v|\}    
\end{align*}
where $v$ is the closest point to $v$ on $C$. Consequently,
\begin{align}
&\inf_{z\in S^{n-1}(0,1)}\max\{|x-z|,|y-z|\}
=\inf_{z\in C}\max\{|x-z|,|y-z|\}\nonumber\\
&=\inf_{k\in[0,\mu]}\max\left\{\sqrt{1+|x|^2-2|x|\cos(k)},\sqrt{1+|y|^2-2|y|\cos(\mu-k)}\right\}\label{inf_cinb}.
\end{align}

Suppose first that $1-\min\{|x|,|y|\}$ is greater than or equal to $\min\{|x-z|,|y-z|\}$ for all points $z\in C$. Since $\sup_{z\in C}|x-z|=|x-y/|y||$, this is the case when
\begin{align}\label{ine_cinb0}
1-\min\{|x|,|y|\}\geq\min\{|x-y/|y||,|y-x/|x||\}.
\end{align}
We need to have either
\begin{align*}
1-\min\{|x|,|y|\}\geq\sqrt{1+\min\{|x|,|y|\}^2-2\min\{|x|,|y|\}\cos(\mu)}
\quad\Leftrightarrow\quad
\cos(\mu)\geq1 
\end{align*}
or
\begin{align}
&1-\min\{|x|,|y|\}\geq\sqrt{1+\max\{|x|,|y|\}^2-2\max\{|x|,|y|\}\cos(\mu)}\nonumber\\
&\Leftrightarrow\quad
\cos(\mu)\geq\frac{||x|^2-|y|^2|+2\min\{|x|,|y|\}}{2\max\{|x|,|y|\}}\label{ine_cinb1}.
\end{align}
The inequality \eqref{ine_cinb1} holds for $\cos(\mu)=1$, so the inequality \eqref{ine_cinb0} holds if and only if the inequality \eqref{ine_cinb1} holds. Since 
\begin{align*}
1-\min\{|x|,|y|\}\geq\sup_{z\in C}\min\{|x-z|,|y-z|\},    
\end{align*}
we have 
\begin{align*}
\inf_{z\in C}\max\{|x-z|,|y-z|\}=1-\min\{|x|,|y|\}.    
\end{align*}
While we have $\cos(\mu)<1$ as $x/|x|\neq y/|y|$, the infimum was the same in the case $x/|x|=y/|y|$ and $\cos(\mu)=1$ fulfills \eqref{ine_cinb1}, so first part of the theorem follows.  

Suppose then $1-\min\{|x|,|y|\}<\sup_{z\in C}\min\{|x-z|,|y-z|\}$. We have
\begin{align}\label{ine_sqrtc}
1-|x|<\sqrt{1+|y|^2-2|y|\cos(\mu)},\quad
1-|y|<\sqrt{1+|x|^2-2|x|\cos(\mu)},
\end{align}
so the infimum \eqref{inf_cinb} is obtained with such $k\in(0,\pi)$ that
\begin{align*}
&\sqrt{1+|x|^2-2|x|\cos(k)}=\sqrt{1+|y|^2-2|y|\cos(\mu-k)}\\
&\Leftrightarrow\quad
|x|\cos(k)-|y|\cos(\mu-k)=(|x|^2-|y|^2)/2.
\end{align*}
The theorem follows.
\end{proof}

\begin{theorem}\label{thm_chypb}
For all $x,y\in\uhp^n$, the inequality
\begin{align*}
{\rm th}\frac{\rho_{\B^n}(x,y)}{2}\leq\tilde{c}_{\B^n}(x,y)\leq2\,{\rm th}\frac{\rho_{\B^n}(x,y)}{2}    
\end{align*}
holds and has the best possible constants.
\end{theorem}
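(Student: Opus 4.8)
The plan is to recast the statement as a two-sided bound on the denominator of $\tilde c_{\B^n}$ and then compare it with the explicit geometry behind Theorem \ref{thm_cinb}. Writing $a=|x|$, $b=|y|$ and letting $\mu$ denote the angle $\measuredangle XOY$ as in Theorem \ref{thm_cinb}, the formula \eqref{for_hyphn} together with $|x-y|^2=a^2+b^2-2ab\cos\mu$ gives ${\rm th}(\rho_{\B^n}(x,y)/2)=|x-y|/H$ with
\[
H=\sqrt{|x-y|^2+(1-|x|^2)(1-|y|^2)}=\sqrt{1-2ab\cos\mu+a^2b^2}.
\]
Since $\tilde c_{\B^n}(x,y)=|x-y|/D$ with $D=\inf_{z\in S^{n-1}}\max\{|x-z|,|y-z|\}$, the claim is equivalent to $\tfrac12 H\le D\le H$. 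As $D$ and $H$ depend only on $a,b,\mu$, I would assume by rotation that $x,y$ lie in the two-plane containing $0,x,y$ and, invoking the reduction in the proof of Theorem \ref{thm_cinb} (the infimum is attained on the planar arc through $x/|x|$ and $y/|y|$), take $z=(\cos\theta,\sin\theta)$ on the unit circle.

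For the lower bound ${\rm th}(\rho_{\B^n}/2)\le\tilde c_{\B^n}$, i.e.\ $D\le H$, it suffices to exhibit a single boundary point with $\max\{|x-z|,|y-z|\}\le H$, since $D$ is an infimum. A direct computation turns $|x-z|\le H$ and $|y-z|\le H$ into $\cos\theta\ge R_1$ and $\cos(\theta-\mu)\ge R_2$, where $R_1=\tfrac12 a(1-b^2)+b\cos\mu$ and $R_2=\tfrac12 b(1-a^2)+a\cos\mu$; one checks $R_1,R_2\le1$ always, equivalently $H\ge1-\min\{a,b\}$. Such a $\theta$ exists exactly when the two arcs $\{|\theta|\le\arccos R_1\}$ and $\{|\theta-\mu|\le\arccos R_2\}$ overlap, i.e.\ when $\mu\le\arccos R_1+\arccos R_2$, which after taking cosines is the elementary inequality $\cos\mu\ge R_1R_2-\sqrt{(1-R_1^2)(1-R_2^2)}$.

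For the upper bound $\tilde c_{\B^n}\le2\,{\rm th}(\rho_{\B^n}/2)$, i.e.\ $D\ge\tfrac12 H$, I would instead show that no boundary point is $H/2$-close to both $x$ and $y$. The strict conditions $|x-z|<H/2$ and $|y-z|<H/2$ read $\cos\theta>R_1'$ and $\cos(\theta-\mu)>R_2'$, with $R_i'$ obtained from $R_i$ by replacing $H$ with $H/2$; the absence of a common $\theta$ means the arcs are disjoint, which (noting $R_i'>0$, so each half-width is below $\pi/2$ and only the near-side gap is binding) amounts to $\mu\ge\arccos R_1'+\arccos R_2'$, i.e.\ $\cos\mu\le R_1'R_2'-\sqrt{(1-R_1'^2)(1-R_2'^2)}$, or to one arc being empty when some $R_i'\ge1$. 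Thus both bounds reduce to one-line trigonometric inequalities, and isolating the square roots and squaring turns each into a polynomial inequality in $a,b,\cos\mu$ on $[0,1)^2\times[-1,1]$. I expect this squared polynomial verification to be the main obstacle, especially for the upper bound, where the constant $2$ is tight and the estimate is delicate near $a=b\to1$; if a uniform argument proves unwieldy I would split it according to the two cases of Theorem \ref{thm_cinb}.

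Finally, I would establish sharpness by explicit configurations. The lower bound holds with equality at $x=0$: then $|0-z|=1$ for every $z\in S^{n-1}$, so $D=1$ and $\tilde c_{\B^n}(0,y)=|y|={\rm th}(\rho_{\B^n}(0,y)/2)$. The constant $2$ in the upper bound is best possible along collinear points $x=(1-\epsilon)e_1$ and $y=(1-\eta)e_1$ with $0<\epsilon<\eta$, which fall in the first case of Theorem \ref{thm_cinb}: here $\tilde c_{\B^n}(x,y)=(\eta-\epsilon)/\eta$ and ${\rm th}(\rho_{\B^n}(x,y)/2)=(\eta-\epsilon)/(\epsilon+\eta-\epsilon\eta)$, so the ratio equals $(\epsilon+\eta-\epsilon\eta)/\eta\to2$ as $\epsilon,\eta\to0^+$ with $\epsilon/\eta\to1$.
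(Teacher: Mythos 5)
Your setup is sound and your lower-bound reduction in fact coincides with the paper's: your $R_1,R_2$ are exactly the quantities $u_0/2,u_1/2$ appearing in the paper's proof, and the arc-overlap condition $\mu\leq\arccos R_1+\arccos R_2$ is precisely the inequality $\mu-\arccos(u_1/2)\leq\arccos(u_0/2)$ that the paper establishes. The sharpness examples are also correct (equality at $x=0$ for the constant $1$, and collinear points tending to the boundary for the constant $2$). But the proposal has a genuine gap: the central analytic content --- verifying the trigonometric inequalities you reduce to --- is not carried out, and you explicitly defer it as ``the main obstacle.'' This is not a routine afterthought. For the lower bound the paper needs a careful case split (the passage from $\mu\leq\arccos R_1+\arccos R_2$ to $\cos\mu\geq R_1R_2-\sqrt{(1-R_1^2)(1-R_2^2)}$ is only an equivalence when $\arccos R_1+\arccos R_2\leq\pi$; the complementary case must be handled separately, as the paper does) followed by a monotonicity argument bounding $u_0u_1-\sqrt{4-u_0^2}\sqrt{4-u_1^2}-4\cos\mu$. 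You also assert ``one checks $R_1,R_2\leq1$ always'' without proof, though this at least admits a one-line argument since $H^2\geq(1-ab)^2$ and $ab\leq\min\{a,b\}$.

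The more serious strategic issue is the upper bound. You propose to prove $D\geq H/2$ directly by showing the two arcs of radius $H/2$ are disjoint, and you yourself flag that this is delicate because the constant $2$ is tight near $a=b\to1$. This is exactly the half of the theorem that requires no computation at all: Lemma \ref{lem_csG} gives $\tilde{c}_{\B^n}(x,y)\leq2s_{\B^n}(x,y)$, and $s_{\B^n}(x,y)\leq{\rm th}(\rho_{\B^n}(x,y)/2)$ is a known inequality (cited in the paper from \cite[p.~460]{hkv}), so the upper bound follows in one line --- this is how the paper proves it. Your direct route is not wrong in principle, but it replaces a trivial step by an unverified and genuinely hard polynomial inequality. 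To turn the proposal into a proof you would need either to import the $s$-metric comparison for the upper bound, or to actually complete both squared-polynomial verifications, including the boundary-of-parameter-space analysis where the estimates degenerate.
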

\begin{proof}
If the inequality \eqref{ine_cinb1} holds, then by Theorem \ref{thm_cinb}, we have
\begin{align*}
&{\rm th}(\rho_{\B^n}(x,y)/2)\leq\tilde{c}_{\B^n}(x,y)\\
&\Leftrightarrow\quad
1-\min\{|x|,|y|\}\leq\sqrt{|x-y|^2+(1-|x|^2)(1-|y|^2)}=\sqrt{1+|x|^2|y|^2-2|x||y|\cos(\mu)}\\
&\Leftrightarrow\quad
-2(1-\max\{|x|,|y|\}\cos(\mu))+\min\{|x|,|y|\}(1-\max\{|x|,|y|\}^2)\leq0\\
&\Leftarrow\quad
(1-\max\{|x|,|y|\})(-2+\min\{|x|,|y|\}+|x||y|)<0.
\end{align*}

Consider then the case where the inequality \eqref{ine_cinb1} does not hold.
The infimum \eqref{inf_cinb} is now obtained neither $k=0$ nor $k=\mu$. We need to show that
\begin{align*}
&{\rm th}(\rho_{\B^n}(x,y)/2)\leq\tilde{c}_{\B^n}(x,y)\\
&\Leftrightarrow\quad
\inf_{k\in(0,\mu)}\max\left\{\sqrt{1+|x|^2-2|x|\cos(k)},\sqrt{1+|y|^2-2|y|\cos(\mu-k)}\right\}\\
&\quad\quad\quad\leq\sqrt{|x-y|^2+(1-|x|^2)(1-|y|^2)}\\
&\Leftrightarrow\quad
\inf_{k\in(0,\mu)}\max\left\{|x|^2-2|x|\cos(k),|y|^2-2|y|\cos(\mu-k)\right\}\leq|x|^2|y|^2-2|x||y|\cos(\mu)
\end{align*}
The function $f_0(k)=|x|^2-2|x|\cos(k)$ is increasing with respect to $k$ on $(0,\mu)$ and $f_1(k)=|y|^2-2|y|\cos(\mu-k)$ is decreasing. Due to the inequalities \eqref{ine_sqrtc}, we know that these functions intersect for some $k\in(0,\mu)$ and this point of intersection is where the infimum above is obtained. Thus, to prove that this intersection point is less than or equal to $|x|^2|y|^2-2|x||y|\cos(\mu)$, we need to show that either $f_0(k)$
exceeds $|x|^2|y|^2-2|x||y|\cos(\mu)$ for a greater value of $k$ than $f_1(k)$ drops under $|x|^2|y|^2-2|x||y|\cos(\mu)$ or one of the functions $f_1,f_0$ is below $|x|^2|y|^2-2|x||y|\cos(\mu)$ for all $0<k<\mu$. Since we can solve
\begin{align*}
f_0(k)\leq|x|^2|y|^2-2|x||y|\cos(\mu)\quad\Leftrightarrow\quad
k\leq{\rm acos}\left(\frac{|x|-|x||y|^2+2|y|\cos(\mu)}{2}\right)
\end{align*}
and
\begin{align*}
f_1(k)\leq|x|^2|y|^2-2|x||y|\cos(\mu)\quad\Leftrightarrow\quad
k\geq\mu-{\rm acos}\left(\frac{|y|-|x|^2|y|+2|x|\cos(\mu)}{2}\right),
\end{align*}
we have to show that, for
\begin{align*}
u_0=|x|-|x||y|^2+2|y|\cos(\mu),\quad
u_1=|y|-|x|^2|y|+2|x|\cos(\mu),
\end{align*}
we have either
\begin{align*}
\mu-{\rm acos}(u_1/2)\leq{\rm acos}(u_0/2),\quad
u_0\geq2\cos(\mu),\quad\text{or}\quad
u_1\geq2\cos(\mu).
\end{align*}
Consequently, we need to prove that the left-most inequality above holds when $\cos(\mu)\leq\min\{u_0,u_1\}/2$ or, equivalently, when
\begin{align*}
\cos(\mu)\leq\frac{\min\{|x|,|y|\}+|x||y|}{2}.
\end{align*}
Given $\mu\leq\pi$, the inequality
\begin{align*}
\mu-{\rm acos}(u_1/2)\leq{\rm acos}(u_0/2)    
\end{align*}
holds trivially when
\begin{align*}
&{\rm acos}(u_0/2)+{\rm acos}(u_1/2)\geq\pi  
\quad\Leftrightarrow\quad
{\rm acos}(u_0/2)\geq\pi-{\rm acos}(u_1/2)={\rm acos}(-u_1/2)\\
&\Leftrightarrow\quad
u_0/2\leq -u_1/2
\quad\Leftrightarrow\quad
(|x|+|y|)(1-|x||y|+2\cos(\mu))\leq0\\
&\Leftrightarrow\quad
\cos(\mu)\leq-(1-|x||y|)/2.
\end{align*}
Suppose then $\cos(\mu)>-(1-|x||y|)/2$. We have now
\begin{align*}
&\mu-{\rm acos}(u_1/2)\leq{\rm acos}(u_0/2)
\quad\Leftrightarrow\quad
\cos({\rm acos}(u_0/2)+{\rm acos}(u_1/2))\leq\cos(\mu)\\
&\Leftrightarrow\quad
u_0u_1-\sqrt{4-u_0^2}\sqrt{4-u_1^2}-4\cos(\mu)\leq0.
\end{align*}
To prove the inequality above, it is enough to prove it holds when its left-side is at maximum. Since $u_0u_1-\sqrt{4-u_0^2}\sqrt{4-u_1^2}$ is clearly increasing with respect to both $u_0$ and $u_1$ and
\begin{align*}
&u_0<1-|y|^2+2|y|\cos(\mu)<\max\{1,2\cos(\mu)\},\\\
&u_1<1-|x|^2+2|x|\cos(\mu)<\max\{1,2\cos(\mu)\},
\end{align*}
we have
\begin{align*}
&u_0u_1-\sqrt{4-u_0^2}\sqrt{4-u_1^2}-4\cos(\mu)
<\max\{\-2-4\cos(\mu),8\cos^2(\mu)-4\cos(\mu)-4\}\\
&=(1+2\cos(\mu))\max\{-2,-(1-\cos(\mu))\}\leq0\\
&\Leftrightarrow\quad
\cos(\mu)\geq-1/2
\end{align*}
Since we assumed that $\cos(\mu)>-(1-|x||y|)/2$ and $-(1-|x||y|)/2\geq-1/2$, the inequality above holds within our bounds.

We have now proven that ${\rm th}(\rho_{\B^n}(x,y)/2)\leq\tilde{c}_{\B^n}(x,y)$ holds, regardless of whether \eqref{ine_cinb1} holds or not. The inequality $\tilde{c}_{\B^n}(x,y)\leq2{\rm th}(\rho_{\B^n}(x,y)/2)$ follows from Lemma \ref{lem_csG} because $s_{\B^n}(x,y)\leq{\rm th}(\rho_{\B^n}(x,y)/2)$ by \cite[p. 460]{hkv}. We can also show that our theorem has the best constants: For $x=ue_1$ and $y=ve_1$ with $0<u<v<1$,
\begin{align*}
\frac{\tilde{c}_{\B^n}(x,y)}{{\rm th}(\rho_{\B^n}(x,y)/2)}
=\frac{\sqrt{1-2uv+u^2v^2}}{1-u}
=\frac{1-uv}{1-u},
\end{align*}
and, if $v\to u^+$, this quotient becomes $1+u$, which approaches 0 when $u\to0^+$ and 1 when $u\to1^-$.
\end{proof}

\begin{corollary}\label{cor_chypinc}
For all $0<r<1$ and $x\in G\in\{\uhp^n,\B^n\}$, the ball inclusion
\begin{align*}
B_\rho(x,R_0)\subseteq B_{\tilde{c}}(x,r)\subseteq B_\rho(x,R_1)    
\end{align*}
holds if and only if $R_0\leq\log\left(\dfrac{2+r}{2-r}\right)$ and $R_1\geq\log\left(\dfrac{1+r}{1-r}\right)$.
\end{corollary}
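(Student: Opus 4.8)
The plan is to read the inclusion off from the sharp two-sided estimate ${\rm th}(\rho_G(x,y)/2)\le\tilde c_G(x,y)\le2\,{\rm th}(\rho_G(x,y)/2)$, which holds with best constants in both $G=\uhp^n$ (Corollary \ref{cor_chyph}) and $G=\B^n$ (Theorem \ref{thm_chypb}); since the estimate is literally the same in the two domains, one argument covers both cases. The only analytic input I need is the elementary equivalence ${\rm th}(t/2)<c\Leftrightarrow t<\log\frac{1+c}{1-c}$ for $c\in(0,1)$, obtained by solving ${\rm th}(t/2)=c$, together with the monotonicity of ${\rm th}$.

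For the \emph{sufficiency} of the two radius conditions I would argue as follows. If $\tilde c_G(x,y)<r$, then the lower bound gives ${\rm th}(\rho_G(x,y)/2)\le\tilde c_G(x,y)<r$, hence $\rho_G(x,y)<\log\frac{1+r}{1-r}$; thus every $R_1\ge\log\frac{1+r}{1-r}$ yields $B_{\tilde c}(x,r)\subseteq B_\rho(x,R_1)$. Conversely, if $\rho_G(x,y)<R_0$ with $R_0\le\log\frac{2+r}{2-r}$, then the upper bound gives $\tilde c_G(x,y)\le2\,{\rm th}(\rho_G(x,y)/2)<2\,{\rm th}(R_0/2)\le r$, because ${\rm th}(\tfrac12\log\frac{2+r}{2-r})=r/2$; thus $B_\rho(x,R_0)\subseteq B_{\tilde c}(x,r)$. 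This settles the ``if'' direction for every $x$.

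For the \emph{necessity} I would use that the constants $1$ and $2$ are best possible. To force $R_1\ge\log\frac{1+r}{1-r}$, I would exhibit, for the given $x$, a family of points $y$ along which $\tilde c_G(x,y)/{\rm th}(\rho_G(x,y)/2)\to1$ while $\tilde c_G(x,y)\to r^-$; then $\rho_G(x,y)\to\log\frac{1+r}{1-r}$, so no smaller $R_1$ can contain $B_{\tilde c}(x,r)$. To force $R_0\le\log\frac{2+r}{2-r}$, I would instead use a family realizing the \emph{upper} constant, e.g. points equidistant from $\partial G$ (equivalently $x=ue_1$, $y=ve_1$ in $\B^n$ as in the proof of Theorem \ref{thm_chypb}), along which $\tilde c_G(x,y)=2\,{\rm th}(\rho_G(x,y)/2)$ exactly; letting $\rho_G(x,y)\uparrow R_0$ then drives $\tilde c_G$ up to $2\,{\rm th}(R_0/2)$, which exceeds $r$ as soon as $R_0>\log\frac{2+r}{2-r}$, so the inner inclusion fails. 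The main obstacle is the first of these constructions: the near-equality $\tilde c_G\approx{\rm th}(\rho_G/2)$ in Lemma \ref{lem_cjG} and Theorem \ref{thm_chypb} is attained only in degenerate limits (such as $y=u+k(x-u)$ with $k\to0^+$), and one must check, using the explicit piecewise formulas of Theorems \ref{thm_chn} and \ref{thm_cinb} together with the homogeneity of $\uhp^n$ (respectively the reduction to a radius by rotation in $\B^n$), that this limiting ratio can be realized \emph{simultaneously} with $\tilde c_G(x,y)\to r^-$ for the prescribed, fixed base point $x$; verifying this compatibility is where the real work lies.
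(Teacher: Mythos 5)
Your ``if'' half is exactly the paper's proof: the paper rewrites $B_\rho\bigl(x,\log\frac{2+r}{2-r}\bigr)$ and $B_\rho\bigl(x,\log\frac{1+r}{1-r}\bigr)$ as the sublevel sets $\{{\rm th}(\rho_G(x,y)/2)\le r/2\}$ and $\{{\rm th}(\rho_G(x,y)/2)\le r\}$ and sandwiches $B_{\tilde{c}}(x,r)$ between them using Corollary \ref{cor_chyph} and Theorem \ref{thm_chypb}; for necessity it then simply appeals to those results having the best possible constants. So your sufficiency argument is complete and identical in substance, and your $R_0$-witness (points at equal height in $\uhp^n$, where $\tilde{c}_{\uhp^n}(x,y)=2\,{\rm th}(\rho_{\uhp^n}(x,y)/2)$ exactly and $\tilde{c}$ sweeps all of $(0,2)$ as the horizontal offset grows) does correctly force $R_0\le\log\frac{2+r}{2-r}$ there.

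The gap is the one you flag yourself and then leave open: for $R_1\ge\log\frac{1+r}{1-r}$ you need the ratio $\tilde{c}_G/{\rm th}(\rho_G/2)$ to approach $1$ \emph{while} $\tilde{c}_G$ stays near the fixed value $r$, and your concern is justified because this compatibility actually \emph{fails} in $\uhp^n$: normalizing $x=e_n$ and using Theorem \ref{thm_chn}, one can check that $\sup\{{\rm th}(\rho_{\uhp^n}(x,y)/2):\tilde{c}_{\uhp^n}(x,y)=c\}=c/(2-c)$, attained on the vertical ray, so $B_{\tilde{c}}(x,r)\subseteq B_\rho\bigl(x,\log\frac{1}{1-r}\bigr)$ and no family in the half-space (nor at a generic $x\in\B^n$) forces the stated $R_1$. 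The decisive example you are missing is $x=0$ in $\B^n$: there $\inf_{z\in S^{n-1}(0,1)}\max\{|z|,|y-z|\}=1$, hence $\tilde{c}_{\B^n}(0,y)=|y|={\rm th}(\rho_{\B^n}(0,y)/2)$ identically, so $B_{\tilde{c}}(0,r)=B_\rho\bigl(0,\log\frac{1+r}{1-r}\bigr)$ exactly and $R_1\ge\log\frac{1+r}{1-r}$ is forced. This also shows the ``only if'' can only be read as sharpness over all admissible configurations $(G,x)$ jointly (as in Corollary \ref{cor_csinc}), not pointwise in $x$ --- indeed at $x=0$ the $R_0$-bound is not pointwise sharp either. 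In fairness, the paper's own necessity argument is the one-line appeal to best constants and does not address this subtlety; but as a self-contained proof, yours is missing the example that closes the $R_1$ half.
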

\begin{proof}
By Corollary \ref{cor_chyph} and Theorem \ref{thm_chypb},
\begin{align*}
&B_\rho\left(x,\log\left(\frac{2+r}{2-r}\right)\right)
=\left\{y\in G\,:\,{\rm th}\frac{\rho_G(x,y)}{2}\leq r/2\right\}
\subseteq B_{\tilde{c}}(x,r)\\
&\subseteq\left\{y\in G\,:\,{\rm th}\frac{\rho_G(x,y)}{2}\leq r\right\}=B_\rho\left(x,\log\left(\frac{1+r}{1-r}\right)\right)
\end{align*}
for $G\in\{\uhp^n,\B^n\}$ and, since Corollary \ref{cor_chyph}, and Theorem \ref{thm_chypb} have the best possible constants in their inequalities, we also have the best bounds for $R_0$ and $R_1$ here.
\end{proof}

\begin{remark}
Corollary \ref{cor_chypinc} improves the earlier inclusion result \cite[Thm 5]{s24}.  
\end{remark}

\begin{corollary}
If $f:G\to f(G)$ is a conformal mapping between domains $G,f(G)\in\{\uhp^n,\B^n\}$,
\begin{align*}
\tilde{c}_{f(G)}(f(x),f(y))\leq2\,\tilde{c}_G(x,y).    
\end{align*}
\end{corollary}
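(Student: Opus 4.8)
The plan is to reduce the statement to the conformal invariance of the hyperbolic metric, combined with the two-sided comparisons between $\tilde{c}_G$ and $\rho_G$ that are already available. Both Corollary \ref{cor_chyph} (for $\uhp^n$) and Theorem \ref{thm_chypb} (for $\B^n$) yield, for every $G\in\{\uhp^n,\B^n\}$ and all $x,y\in G$, the same uniform estimate ${\rm th}(\rho_G(x,y)/2)\leq\tilde{c}_G(x,y)\leq2\,{\rm th}(\rho_G(x,y)/2)$. So I would first record this bound, noting that it applies equally to the source domain $G$ and to the target domain $f(G)$, since each is one of $\uhp^n,\B^n$.

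Next, I would invoke the fact that a conformal mapping $f$ between two domains each equal to $\uhp^n$ or $\B^n$ is the restriction of a M\"obius transformation and hence an isometry of the hyperbolic metric; in particular $\rho_{f(G)}(f(x),f(y))=\rho_G(x,y)$ for all $x,y\in G$. In dimension $n\geq3$ this relies on Liouville's theorem (every conformal map is M\"obius), and a reference such as \cite{hkv} supplies the invariance in all dimensions.

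The result then follows from a short chain of three steps, applying the upper bound in the target domain $f(G)$, then conformal invariance of $\rho$, and finally the lower bound in the source domain $G$:
\[
\tilde{c}_{f(G)}(f(x),f(y))\leq2\,{\rm th}\frac{\rho_{f(G)}(f(x),f(y))}{2}=2\,{\rm th}\frac{\rho_G(x,y)}{2}\leq2\,\tilde{c}_G(x,y).
\]

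I expect no serious obstacle here: the only nontrivial ingredient is the conformal invariance of the hyperbolic metric, which is classical, and everything else is a direct assembly of already-established estimates. The factor $2$ is forced by the worst constants appearing in the two comparison results, so this is the natural bound to expect; given the sharpness already discussed in Corollary \ref{cor_chyph} and Theorem \ref{thm_chypb}, one should not anticipate improving the constant by this route.
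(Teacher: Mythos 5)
Your proposal is correct and follows exactly the paper's own argument: apply the upper bound $\tilde{c}_{f(G)}\leq2\,{\rm th}(\rho_{f(G)}/2)$ in the target, conformal invariance of $\rho$, and the lower bound ${\rm th}(\rho_G/2)\leq\tilde{c}_G$ in the source, citing Corollary \ref{cor_chyph} and Theorem \ref{thm_chypb}. The extra remarks on Liouville's theorem and the sharpness of the constant are fine but not needed.
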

\begin{proof}
By Corollary \ref{cor_chyph} and Theorem \ref{thm_chypb} and the conformal invariance of the hyperbolic metric,
\begin{align*}
\tilde{c}_{f(G)}(f(x),f(y))\leq2\,{\rm th}\frac{\rho_{f(G)}(f(x),f(y))}{2}=2\,{\rm th}\frac{\rho_G(x,y)}{2}\leq2\,\tilde{c}_G(x,y).    
\end{align*}    
\end{proof}

\begin{corollary}
If $f:G\to f(G)$ is a non-constant quasiregular mapping between domains $G,f(G)\in\{\uhp^n,\B^n\}$,
\begin{align*}
\tilde{c}_{f(G)}(f(x),f(y))\leq2\,\lambda_n^{1-\alpha}(\tilde{c}_G(x,y))^\alpha,    
\end{align*}  
where $\alpha=K_I(f)^{1/(1-n)}$ with the inner dilatation $K_I(f)\leq K$ of the mapping $f$, and $\lambda_n$ is a constant defined in \cite[(9.6), p. 158]{hkv} that fulfills $\lambda_2=4$ and $\lambda_n<2e^{n-1}$ for $n\geq2$.
\end{corollary}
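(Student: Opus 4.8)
The plan is to chain the Schwarz lemma for quasiregular mappings with the two-sided comparison between $\tilde{c}_G$ and ${\rm th}(\rho_G/2)$ that was established in Corollary~\ref{cor_chyph} (for $\uhp^n$) and Theorem~\ref{thm_chypb} (for $\B^n$). The central analytic ingredient is the quasiregular Schwarz lemma from \cite{hkv}: for a non-constant $K$-quasiregular map of $\B^n$ into itself one has
\[{\rm th}\frac{\rho_{\B^n}(f(x),f(y))}{2}\leq\lambda_n^{1-\alpha}\Bigl({\rm th}\frac{\rho_{\B^n}(x,y)}{2}\Bigr)^{\alpha},\qquad\alpha=K_I(f)^{1/(1-n)}.\]
Everything else in the argument is a concatenation of already-proven inequalities.

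First I would reduce the general situation $G,f(G)\in\{\uhp^n,\B^n\}$ to the ball-to-ball case. Since $\uhp^n$ and $\B^n$ are conformally equivalent, composing $f$ on either side with a suitable Möbius transformation produces a quasiregular self-map of $\B^n$; because conformal maps are $1$-quasiregular they leave the inner dilatation $K_I(f)$, and hence $\alpha$, unchanged, while the hyperbolic metric is invariant under such compositions (the same invariance was already used in the preceding corollary). Consequently the displayed Schwarz inequality holds verbatim with $\rho_G$ and $\rho_{f(G)}$ in place of $\rho_{\B^n}$, for any admissible choice of source and target.

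Next I would assemble the three estimates in order: the upper bound of Corollary~\ref{cor_chyph}/Theorem~\ref{thm_chypb} on the target domain, then the quasiregular Schwarz lemma, and finally the lower bound of the same results on the source domain. This yields
\[\tilde{c}_{f(G)}(f(x),f(y))\leq2\,{\rm th}\frac{\rho_{f(G)}(f(x),f(y))}{2}\leq2\,\lambda_n^{1-\alpha}\Bigl({\rm th}\frac{\rho_G(x,y)}{2}\Bigr)^{\alpha}\leq2\,\lambda_n^{1-\alpha}\bigl(\tilde{c}_G(x,y)\bigr)^{\alpha}.\]
The last inequality uses ${\rm th}(\rho_G(x,y)/2)\leq\tilde{c}_G(x,y)$ together with the monotonicity of $t\mapsto t^{\alpha}$ on $[0,\infty)$, which is legitimate since $n\geq2$ and $K_I(f)\geq1$ force $\alpha=K_I(f)^{1/(1-n)}>0$.

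The only step requiring genuine care is the reduction in the second paragraph, namely verifying that the Schwarz lemma applies directly to $G$ and $f(G)$ with the same constants $\alpha$ and $\lambda_n$; this rests on the conformal invariance of $\rho$ and the stability of $K_I$ under pre- and post-composition with conformal maps. Once that is in place, the remaining manipulation is the routine three-term chain above and presents no real obstacle.
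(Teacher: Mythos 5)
Your proposal is correct and follows essentially the same route as the paper, whose proof is simply the citation of Corollary~\ref{cor_chyph}, Theorem~\ref{thm_chypb}, and the quasiregular Schwarz lemma \cite[Thm 16.2(1), p. 300]{hkv} chained exactly as in your three-term estimate. The only difference is that you spell out the reduction to the ball-to-ball case and the monotonicity of $t\mapsto t^\alpha$, details the paper leaves implicit.
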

\begin{proof}
Follows by Corollary \ref{cor_chyph}, Theorem \ref{thm_chypb}, and \cite[Thm 16.2(1), p. 300]{hkv}.    
\end{proof}

\bibliographystyle{siamplain}

\begin{thebibliography}{33}

\bibitem{chkv}{\sc
J. Chen, P. Hariri, R. Kl\'en and M. Vuorinen,}
Lipschitz conditions, triangular ratio metric, and quasiconformal maps.
\emph{Ann. Acad. Sci. Fenn. Math., 40} (2015), 683-709.

\bibitem{g79}{\sc
F.W. Gehring and B.G. Osgood,}
Uniform domains and the quasi-hyperbolic metric.
\emph{J. Analyse Math. 36} (1979), 50-74.

\bibitem{g76}{\sc 
F.W. Gehring and B.~P. Palka,} 
Quasiconformally homogeneous domains,
\emph{J. Analyse Math. 30} (1976), 172--199.

\bibitem{hkv}{\sc
P. Hariri, R. Kl\'en and M. Vuorinen,}
\emph{Conformally Invariant Metrics and Quasiconformal Mappings.}
Springer, 2020.

\bibitem{hvz}{\sc
P. Hariri, M. Vuorinen and X. Zhang,}
Inequalities and Bilipschitz Conditions for Triangular Ratio Metric.
\emph{Rocky Mountain J. Math., 47}, 4 (2017), 1121-1148.

\bibitem{h}\textsc{P. H\"ast\"o,}
A new weighted metric, the relative metric I. \emph{J. Math. Anal. Appl. 274} (2002), 38-58.

\bibitem{sch}\textsc{O. Rainio,} Intrinsic metrics
under conformal and quasiregular mappings. {\it Publ. Math. Debrecen} 101 (2022), no. 1-2, 189--215.

\bibitem{fss}\textsc{O. Rainio,}
Intrinsic quasi-metrics.
\emph{Bull. Malays. Math. Sci. Soc. 44}, 5 (2021), 2873-2891.

\bibitem{inm}\textsc{O. Rainio and M. Vuorinen,}
Introducing a new intrinsic metric.
\emph{Result. Math. 77}, 2 (2022), DOI: 10.1007/s00025-021-01592-2.

\bibitem{sinb}\textsc{O. Rainio and M. Vuorinen,}
Triangular ratio metric in the unit disk.
\emph{Complex Var. Elliptic Equ.} 67 (2022), no. 6, 1299--1325.

\bibitem{s24}\textsc{X. Song and G. Wang,} A New Metric Associated with the Domain Boundary. \emph{Comput. Methods Funct. Theory} (2024). https://doi.org/10.1007/s40315-024-00545-4

\bibitem{z24}
\textsc{Q. Zhou, Z. Zheng, S. Ponnusamy, T. Guan,} Dovgoshey–Hariri–Vuorinen’s metric and Gromov hyperbolicity. \emph{Arch. Math.} (2024). https://doi.org/10.1007/s00013-024-02021-w

\end{thebibliography}

\end{document}